\documentclass[11pt]{amsart}
\usepackage[english,activeacute]{babel}
\usepackage{amsmath,amsfonts,amssymb,amstext,amsthm,amscd,mathrsfs,amsbsy,xypic,centernot}
\usepackage{xcolor}
\usepackage{hyperref}


\newtheorem{theorem}{Theorem}[section]
\newtheorem{proposition}[theorem]{Proposition}

\newtheorem{lemma}[theorem]{Lemma}

\theoremstyle{definition}
\newtheorem{definition}[theorem]{Definition}
\newtheorem{example}[theorem]{Example}
\newtheorem{remark}[theorem]{Remark}
\newtheorem{convention}[theorem]{Convention}
\newtheorem{notation}[theorem]{Notation}


\newtheorem{theoremx}{Theorem}



\newcommand{\N}{\mathbb N}
\newcommand{\Z}{\mathbb Z}

\newcommand{\R}{\mathbb R}

\newcommand{\K}{\mathbb K}

\newcommand{\pol}{\mathbb \K[x_1,\ldots,x_n]}
\newcommand{\lau}{\mathbb \K[t_1^{\pm},\ldots,t_d^{\pm}]}
\newcommand{\kg}{\K[t^{\Gamma}]}

\newcommand{\V}{\mathbf{V}}

\newcommand{\sd}{\check{\sigma}}

\newcommand{\Rz}{\R_{\geq0}}

\newcommand{\conv}{\operatorname{Conv}}

\newcommand{\sing}{\operatorname{Sing}}
\newcommand{\ch}{\operatorname{char}}
\newcommand{\Gr}{\operatorname{Gr}}

\newcommand{\Jac}{\operatorname{Jac}}

\newcommand{\Bl}{\operatorname{Bl}}

\newcommand{\ns}{\mathcal{N}_{\sigma}}

\newcommand{\jp}{\mathcal{J}_p}
\newcommand{\jo}{\mathcal{J}_0}
\newcommand{\jj}{\mathcal{J}}

\newcommand{\ig}{I_{\Gamma}}
\newcommand{\xg}{X_{\Gamma}}


\providecommand{\keywords}[1]{{\textbf{Keywords:}} #1}


\begin{document}

\title{Nash blowups of toric varieties in prime characteristic}
 
\author[D. Duarte]{Daniel Duarte}
\address{Centro de Ciencias Matem\'aticas, UNAM, Campus Morelia, Morelia, Michoac\'an, M\'exico}
\email{adduarte@matmor.unam.mx}

\author[J. Jeffries]{Jack Jeffries$^\textrm{2}$}
\address{University of Nebraska-Lincoln}
\email{jack.jeffries@unl.edu}
\thanks{$^\textrm{2}$ The second author was partially supported by NSF CAREER Award DMS-2044833}

\author[L. N{\'u}{\~n}ez-Betancourt]{Luis N{\'u}{\~n}ez-Betancourt$^\textrm{3}$}
\address{Centro de Investigaci{\'o}n en Matem{\'a}ticas, Guanajuato, Gto., M{\'e}xico}
\email{luisnub@cimat.mx}
\thanks{$^\textrm{3}$ The third author was partially supported by  CONACyT Grant 284598 and C\'atedras Marcos Moshinsky.}
 
\subjclass[2010]{14E15,14G17,14M25}
\keywords{Nash blowup, toric varieties, prime characteristic}



\begin{abstract} 
We initiate the study of the resolution of singularities properties of Nash blowups over fields of prime characteristic. We prove that the iteration of normalized Nash blowups desingularizes normal toric surfaces. We also introduce a prime characteristic version of the logarithmic Jacobian ideal of a toric variety and prove that its blowup coincides with the Nash blowup of the variety. As a consequence, the Nash blowup of a, not necessarily normal, toric variety of arbitrary dimension in prime characteristic can be described combinatorially.
\end{abstract}

\maketitle


\section*{Introduction}

The Nash blowup of an algebraic variety is a modification that replaces singular points by limits of tangent spaces. It has been proposed to solve singularities by iterating this blowup \cite{S,No}. This question has been extensively studied \cite{No,R,GS1,GS2,Hi,Sp,GS3,At,GM,GT,D1,Cha,DG}. 

Until recently, the resolution properties of Nash blowups have been studied only over fields of characteristic zero. This is due to a well-known example given by A. Nobile that shows that the Nash blowup could be trivial in prime characteristic \cite{No}. It was recently shown that Nash blowups behave as expected in prime characteristic after adding the condition of normality \cite{DN1}. Hence, the original question regarding Nash blowups and resolution of singularities can be reconsidered in arbitrary characteristic by iterating  normalized Nash blowups. We stress that the condition of normality is frequently assumed for many results also in characteristic zero. For instance, M. Spivakovsky showed that the iteration of normalized Nash blowups gives a resolution of singularities for complex surfaces \cite{Sp}. 

In this paper we initiate the study of the resolution properties of normalized Nash blowups in prime characteristic. We study this question in the context of toric varieties.

Our first goal in this paper is to describe combinatorially the Nash blowup of a not necessarily normal toric variety of arbitrary dimension over fields of prime characteristic. In characteristic zero, the key ingredient for such a description is given by the so-called logarithmic Jacobian ideal, which was originally introduced by Gonz\'alez-Sprinberg \cite{GS1}. This is a monomial ideal determined by linear relations on the generators of the semigroup of the toric variety. Gonz\'alez-Sprinberg showed that the Nash blowup of a normal toric variety is isomorphic to the blowup of its logarithmic Jacobian ideal. This result was later revisited and generalized by several authors. For instance, M. Lejeune-Jalabert and A. Reguera gave a new proof of this result \cite{LJ-R}. In addition, P. Gonz\'alez and B. Teissier extended this result for not necessarily normal toric varieties \cite{GT}.

It is worth mentioning that the isomorphism between Nash blowups of toric varieties and the blowup of their logarithmic Jacobian ideal does not hold over fields of prime characteristic (see Example \ref{counterex}). In this paper we define a positive characteristic version of the logarithmic Jacobian ideal (see Definition \ref{log jac mod p}) that allows us to extend the known result in characteristic zero to prime characteristic.

\begin{theoremx}[{Theorem \ref{Nash=log}}]\label{log=Nash}
The Nash blowup of a toric variety over a field of characteristic $p>0$ is isomorphic to the blowup of its logarithmic Jacobian ideal modulo $p$.
\end{theoremx}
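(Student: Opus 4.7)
The strategy is to reduce the statement to an explicit comparison of two monomial ideals on the toric variety. I would invoke the well-known identification of the Nash blowup of an equidimensional affine variety $X \subset \mathbb{A}^n_{\K}$ of dimension $d$ with the blowup of $X$ along its Jacobian ideal $\Jac(X)$, generated by the images in the coordinate ring of $X$ of the $d \times d$ minors of the Jacobian matrix of any finite generating set of the defining ideal of $X$. This reduces the theorem to showing that, for a (not necessarily normal) affine toric variety $\xg$, the ideal $\Jac(\xg)$ and the mod-$p$ logarithmic Jacobian ideal $\jp$ of Definition \ref{log jac mod p} define isomorphic blowups.

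The core of the proof is a direct computation of $\Jac(\xg)$ in toric coordinates. Writing $\xg \subset \mathbb{A}^n_{\K}$ via the parametrization $x_i = t^{\gamma_i}$, where $\gamma_1, \ldots, \gamma_n$ is a system of generators of the semigroup $\Gamma$, one takes a finite set of binomials generating $\ig$ and computes each $d \times d$ minor of the Jacobian matrix. After reducing modulo $\ig$, every such minor becomes a $\K$-linear combination of elements of the form $\det(M_I)\, t^{\gamma_{i_1} + \cdots + \gamma_{i_d}}$, where $I = \{i_1, \ldots, i_d\} \subset \{1, \ldots, n\}$ and $M_I$ is the integer $d \times d$ matrix whose rows are $\gamma_{i_1}, \ldots, \gamma_{i_d}$. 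In characteristic zero, such a term is nonzero exactly when the $\gamma_{i_j}$ are $\Z$-linearly independent, recovering the classical logarithmic Jacobian ideal $\jo$ of Gonz\'alez-Sprinberg. In characteristic $p$, it is nonzero exactly when $\det(M_I) \not\equiv 0 \pmod{p}$, which by construction is the generator condition defining $\jp$.

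After localizing at the dense torus, where every $t^{\gamma_i}$ is a unit, the two ideals $\Jac(\xg)\cdot \kg$ and $\jp$ coincide up to rescaling each generator by a nonzero scalar. They therefore agree as monomial ideals of $\kg$, and so define the same blowup of $\xg$, proving the theorem.

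The step I expect to be the main obstacle is the explicit computation of $\Jac(\xg)$ in toric coordinates, because $\ig$ is generally not a complete intersection and can require many binomial generators. One must verify that the $d \times d$ minors of the Jacobian of any sufficient generating set of $\ig$, taken modulo $\ig$, produce precisely the monomials $\det(M_I)\,t^{\gamma_{i_1}+\cdots+\gamma_{i_d}}$ as generators, with no extra generators and no unexpected cancellations in characteristic $p$. A clean way to handle this is to use a canonical generating set, such as one indexed by circuits of the matrix $[\gamma_1|\cdots|\gamma_n]$ or the Graver basis of $\ig$; alternatively, one may follow the logarithmic derivation approach of Lejeune-Jalabert--Reguera, which makes the determinantal structure transparent and extends naturally to the non-normal setting treated by Gonz\'alez--Teissier.
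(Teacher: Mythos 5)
Your opening reduction is the main problem: there is no citable theorem identifying the Nash blowup of $X\subseteq\K^n$ with the blowup of the ideal of minors of the Jacobian matrix of an \emph{arbitrary} generating set of the defining ideal (and the relevant minors have size $(n-d)\times(n-d)$, not $d\times d$). The statement that is actually available, and that the paper uses, is Nobile's Theorem~1: one must fix exactly $c=n-d$ elements of $\ig$ whose Jacobian has generic rank $c$ along $\xg$; then the Nash blowup is the blowup of the ideal of $c\times c$ minors of that $c\times n$ Jacobian. With more than $c$ rows the vector of all $c\times c$ minors is not a Pl\"ucker vector of the Gauss map: since the Jacobian has rank $\le c$ at every point of $\xg$, the matrix of minors indexed by (row set, column set) has rank one, so the ideal of all minors is a product of two fractional ideals, only one of which records the tangent space; its blowup may a priori strictly dominate the Nash blowup, so this identification would itself require proof. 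Moreover, in characteristic $p$ the hypothesis of Nobile's theorem --- that the chosen $c$ equations have Jacobian of generic rank $c$ on $\xg$ --- is exactly the delicate point (differentials can degenerate mod $p$, as in Nobile's own example), and in the paper it is \emph{proved}, not assumed: the authors choose $c$ binomials whose exponent differences form a basis of the saturated lattice $\ker A$, so that the sequence $0\to\Z^c\to\Z^n\to\Z^d\to0$ stays exact after reduction mod $p$, and combine the congruence $\prod_{k\in K}x_k\det J_K\equiv\prod_i x^{\alpha_i}\det\bar{B}_K \ \mathrm{mod}\ \ig$ with primality of $\ig$ and minimality of the $\gamma_i$ to get nonvanishing of some minor.

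The two steps you flag as routine are precisely where the characteristic-$p$ content lies, and your argument does not supply them. The identification of the minor ideal with $\jp$ is obtained in the paper through the mod-$p$ Gale duality of Lemma~\ref{exact}, which converts the $c\times c$ condition $\det\bar{B}_K\neq 0$ into the $d\times d$ condition $\det(\gamma_{i_1}\cdots\gamma_{i_d})\neq 0\ \mathrm{mod}\ p$ on the complementary columns; your assertion that each minor reduces to a combination of terms $\det(M_I)\,t^{\gamma_{i_1}+\cdots+\gamma_{i_d}}$ is not established and, even granted, would still need an argument that no cancellation occurs and that the two monomial ideals have the same support. Finally, your last step is too weak: two ideals that agree after localizing at the dense torus need not have isomorphic blowups (on the cuspidal cubic, $\langle t^2,t^3\rangle$ and the unit ideal agree on the torus), since a blowup is not determined by its restriction to a dense open set. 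What makes the comparison legitimate in the paper is that the relevant ideals differ, globally on $\xg$, by multiplication by principal monomial (fractional) ideals such as $\langle x_1\cdots x_n\rangle$ and $\langle\prod_i x^{\alpha_i}\rangle$, and blowups are insensitive to such factors. Your suggested fallbacks (circuits, Graver bases, or the logarithmic derivation approach of Lejeune-Jalabert--Reguera) could plausibly be developed into a complete proof, but as written the proposal is a plan with its two essential steps missing.
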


Combining Theorem \ref{log=Nash} with previous work by  G\'onzalez-Teissier \cite{GT}, one obtains a combinatorial description of the Nash blowup in prime characteristic. We specialize this description to dimension two to obtain our second main theorem.

\begin{theoremx}[{Theorem \ref{Nash toric surfaces}}]\label{resol toric surf}
The iteration of normalized Nash blowups desingularizes normal toric surfaces over fields of prime characteristic.
\end{theoremx}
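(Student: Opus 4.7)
The plan is to use Theorem \ref{log=Nash} to translate the question into a combinatorial one about fan refinements, and then reduce it to the characteristic-zero case, where the resolution of normal toric surfaces by iterated normalized Nash blowups is already known (as a special case of Spivakovsky's theorem \cite{Sp}).

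Let $X_\sigma$ be a normal toric surface corresponding to a strongly convex rational two-dimensional cone $\sigma \subset N_\R$. By Theorem \ref{log=Nash}, the Nash blowup of $X_\sigma$ is isomorphic to the blowup of the logarithmic Jacobian ideal $\mathcal{J}_p$ modulo $p$. Since $\mathcal{J}_p$ is a monomial ideal, its blowup followed by normalization is a toric morphism, corresponding to a specific subdivision of $\sigma$. The theorem therefore reduces to showing that iterating this subdivision operation, starting from $\sigma$, eventually produces the regular fan.

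I would then study $\mathcal{J}_p$ explicitly using the structure of the Hilbert basis $v_0, v_1, \ldots, v_k$ of $\sigma^\vee \cap M$, ordered along the boundary. For a two-dimensional cone, any two consecutive Hilbert basis elements form a $\Z$-basis of $M$, so their $2 \times 2$ determinant is $\pm 1$ and hence a unit modulo $p$. This forces the ``adjacent'' generators $x^{v_i + v_{i+1}}$ of the characteristic-zero log Jacobian ideal to appear in $\mathcal{J}_p$ for every $p$. A Hilbert-basis argument specific to dimension two (using, for instance, that non-adjacent sums $v_i + v_j$ lie in the semigroup generated by adjacent ones) should show that these adjacent generators already determine the integral closure of $\mathcal{J}_p$, so that the normalized Nash blowup of $X_\sigma$ is insensitive to the characteristic and agrees with its characteristic-zero counterpart.

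Granted this, the theorem follows by invoking the characteristic-zero resolution result for normal toric surfaces, together with an induction on a numerical invariant of $\sigma$ (e.g.~the length of its Hirzebruch--Jung continued fraction) which strictly decreases under the combinatorial subdivision described above until only regular cones remain. The main obstacle is the intermediate claim that, for a normal toric surface, the adjacency generators really do dominate $\mathcal{J}_p$ up to integral closure: this is precisely where the low-dimensional hypothesis is crucial, since it rules out the higher-dimensional Nobile-type phenomena that necessitated the passage from the classical log Jacobian to $\mathcal{J}_p$ in the first place.
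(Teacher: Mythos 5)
Your overall strategy is the paper's: use Theorem \ref{Nash=log} to replace the Nash blowup by the blowup of $\jp$, observe that consecutive minimal generators of $\Gamma=\sd\cap\Z^2$ have determinant $\pm1$, so the ``adjacent'' monomials lie in $\jp$ for every $p$, deduce that the normalized blowup of $\jp$ (equivalently, its Newton polyhedron) is independent of the characteristic, and then quote the characteristic-zero theorem of Gonz\'alez-Sprinberg. The gap is precisely in the step you leave as a sketch, namely that the adjacent generators dominate $\jp$ up to integral closure. The mechanism you suggest for it --- that a non-adjacent sum $\gamma_i+\gamma_j$ lies in the semigroup generated by the adjacent sums --- is false. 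Already for the $A_1$-singularity, where $\Gamma$ is minimally generated by $(1,0),(1,1),(1,2)$, the non-adjacent sum $(2,2)$ is not an $\N$-combination of the adjacent sums $(2,1)$ and $(2,3)$, and it is not even in the monomial ideal of $\kg$ that they generate, since $(2,2)-(2,1)=(0,1)$ and $(2,2)-(2,3)=(0,-1)$ lie outside $\Gamma$; it only lies in the integral closure, being the midpoint of the segment joining $(2,1)$ and $(2,3)$. The statement one actually needs is the polyhedral one: $\gamma_i+\gamma_j\in\conv(\{\gamma_\ell+\gamma_{\ell+1}\})+\sd$, i.e.\ membership in the Newton polyhedron of the ideal generated by the adjacent monomials. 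This is exactly what the paper proves (Lemma \ref{abg} together with Proposition \ref{vertex}), using that the $\gamma_\ell$ lie on the compact edges of the boundary of $\conv(\Gamma\setminus\{0\})$ (Proposition \ref{seg}); it yields $\ns(\jp)=\ns(\jo)$ and hence the same subdivision of $\sigma$ in every characteristic. So your target claim is correct, but the route you indicate to it does not work and must be replaced by a convexity argument of this kind.

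Two smaller points. First, once the equality of subdivisions is established, it must be applied at every stage of the iteration; this is unproblematic because each chart of the normalized blowup is again a normal toric surface, but it should be said. Second, the auxiliary induction on a strictly decreasing invariant (length of the Hirzebruch--Jung continued fraction) is not needed and is not free: termination of the iteration is exactly the content of the characteristic-zero theorem you are invoking, and establishing such a strict decrease is essentially Gonz\'alez-Sprinberg's original argument rather than a routine supplement to it.
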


G. Gonz\'alez-Sprinberg proved the characteristic-zero version of Theorem \ref{resol toric surf} \cite{GS1}. An important step in his proof consists in showing that the normalized Nash blowup of a normal toric surface is determined by a subdivision of the cone defining the surface. The key step to prove Theorem \ref{resol toric surf} is to show that this subdivision is actually the same in prime characteristic. As a consequence, it is equivalent to solve normal toric surfaces iterating normalized Nash blowups in zero or prime characteristic. Once this is shown, the rest of the proof of  Theorem \ref{resol toric surf} follows the work of Gonz\'alez-Sprinberg.


As mentioned before,  the combinatorics of normalized Nash blowups of toric surfaces are independent of the characteristic. One may ask whether this is true for higher dimensions. We conclude this paper by exhibiting an example of a toric variety of dimension three whose Nash blowup behaves differently in characteristic zero and two. In particular, the normalized Nash blowup of this variety is nonsingular in characteristic zero but singular in characteristic two. A further iteration of normalized Nash blowup gives a nonsingular variety in this case.
\begin{convention}
Throughout this manuscript $\K$ denotes an algebraically closed field.
\end{convention}


\section{Nash blowups of toric varieties}

We are interested in studying the Nash blowup of a toric variety over fields of prime characteristic. We first recall some classical results of the characteristic zero case.

\begin{definition}
Let $\K$ be an algebraically closed field of arbitrary characteristic. Let $X\subseteq\K^n$ be an equidimensional algebraic variety of dimension~$d$. Consider the Gauss map:
\begin{align}
G:X\setminus\sing(X)&\rightarrow\Gr(d,n)\notag\\
x&\mapsto T_xX,\notag
\end{align}
where $\Gr(d,n)$ is the Grassmanian of $d$-dimensional vector spaces in $\K^n$, and $T_xX$ is the tangent space to $X$ at $x$. Denote by $X^*$ the Zariski closure of the graph of $G$. Call $\nu$ the restriction to $X^*$ of the projection of $X\times\Gr(d,n)$ to $X$. The pair $(X^*,\nu)$ is called the Nash blowup of $X$.
\end{definition}

\begin{definition}[{\cite{St,CLS}}]
Let $\Gamma\subseteq\Z^d$ be a semigroup generated by $\{\gamma_1,\ldots,\gamma_n\}$. Consider the $\K$-algebra homomorphism $\pi_{\Gamma}:\pol\to\lau$, $x_i\mapsto t^{\gamma_i}$. Let $\ig=\ker\pi_{\Gamma}$. The variety $\xg=\V(\ig)\subseteq\K^n$ is called the toric variety defined by $\Gamma$.  We denote as $\K[t^{\Gamma}]$ the coordinate ring of $\xg$.
\end{definition}

The first step towards a combinatorial description of Nash blowups of toric varieties in characteristic zero is given by the so-called logarithmic Jacobian ideal. This ideal was originally introduced by G. Gonz\'alez-Sprinberg \cite[Section 2]{GS1}, and later revisited by several authors \cite{LJ-R,GT,ChDG}.

\begin{definition}
Suppose that $\ch(\K)=0$. Let $\Gamma=\langle \gamma_1,\ldots,\gamma_n\rangle_{\N}\subseteq\Z^d$ be a semigroup such that $\langle \gamma_1,\ldots,\gamma_n\rangle_{\Z}=\Z^d$. Consider the following ideal:
$$\jo=\langle t^{\gamma_{i_1}+\cdots+\gamma_{i_d}}\,|\,\det(\gamma_{i_1}\cdots\gamma_{i_d})\neq0, \, 1\leq i_1<\cdots<i_d\leq n \rangle\subseteq\kg.$$
The ideal $\jo$ is called the \textit{logarithmic Jacobian ideal of $\xg$}.
\end{definition}

\begin{theorem}[{\cite{GS1,LJ-R,GT}}]\label{Nash=log-0}
Suppose that  $\ch(\K)=0$. The Nash blowup of $\xg$ is isomorphic to the blowup of its logarithmic Jacobian ideal.
\end{theorem}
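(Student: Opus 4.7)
The plan is to realise both the Nash blowup $\xg^*$ and the blowup $\Bl_{\jo}(\xg)$ as closures of the same rational map defined over the dense open torus orbit of $\xg$, where the Gauss map can be computed by hand. Since both constructions are determined by their behavior on a dense open subset of the smooth locus, it suffices to compare the two descriptions there and then take closures in $\xg \times \mathbb{P}^{N-1}$ with $N=\binom{n}{d}$.

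First, I would use the standard identification of the Nash blowup of a $d$-dimensional equidimensional variety $X \subseteq \K^n$ with the closure in $X \times \mathbb{P}^{N-1}$ of the graph of the map $X \setminus \sing(X) \to \mathbb{P}^{N-1}$ sending a smooth point $x$ to the Pl\"ucker coordinates of $T_xX$. For $\xg$, the torus orbit $(\K^*)^d \hookrightarrow \xg$, $t \mapsto (t^{\gamma_1}, \ldots, t^{\gamma_n})$, lies in the smooth locus, and a direct computation shows that the Jacobian matrix of this parametrization has $d \times d$ minors
$$M_{i_1, \ldots, i_d}(t) \;=\; \det(\gamma_{i_1}, \ldots, \gamma_{i_d}) \cdot \frac{t^{\gamma_{i_1} + \cdots + \gamma_{i_d}}}{t_1 \cdots t_d},$$
which are exactly the Pl\"ucker coordinates of $T_x\xg$ at the image $x$ of $t$.

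Next, I would clear the common unit $(t_1 \cdots t_d)^{-1}$ and observe that, in characteristic zero, every nonzero integer determinant $\det(\gamma_{i_1}, \ldots, \gamma_{i_d})$ is invertible in $\K$. Absorbing these scalars into units does not change which projective coordinates vanish, so on the torus the Gauss map agrees projectively with $t \mapsto [\,t^{\gamma_{i_1} + \cdots + \gamma_{i_d}}\,]_{\det(\gamma_{i_1}, \ldots, \gamma_{i_d}) \neq 0}$, which is precisely the rational map whose graph-closure defines $\Bl_{\jo}(\xg)$. Passing to closures then yields the desired isomorphism.

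The crux of the argument is the step in which the nonzero integer determinants are treated as units. In characteristic $p>0$ a determinant $\det(\gamma_{i_1}, \ldots, \gamma_{i_d})$ that is nonzero in $\Z$ may reduce to zero in $\K$, in which case the Pl\"ucker coordinate $M_{i_1,\ldots,i_d}$ vanishes identically on the torus while the monomial $t^{\gamma_{i_1} + \cdots + \gamma_{i_d}}$ does not. This mismatch is exactly the obstruction that forces the characteristic-$p$ modification of $\jo$ introduced later in the paper.
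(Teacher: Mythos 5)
Your proposal is correct, but it takes a genuinely different route from the argument the paper works with. The paper itself does not reprove Theorem \ref{Nash=log-0} (it quotes it from Gonz\'alez-Sprinberg, Lejeune-Jalabert--Reguera and Gonz\'alez P\'erez--Teissier); the proof it does give, of the characteristic-$p$ analogue Theorem \ref{Nash=log}, works on the equations side: choose $c=n-d$ binomials $x^{\alpha_i}-x^{\beta_i}\in\ig$ whose exponent differences form a basis of $\ker A$, invoke Nobile's theorem to identify $\xg^*$ with the blowup of the ideal of maximal minors of their Jacobian, prove the congruence $\prod_{k\in K}x_k\det J_K\equiv\prod_{i}x^{\alpha_i}\det\bar{B}_K \bmod \ig$, and then use Gale duality (Lemma \ref{exact}) to convert nonvanishing of $B$-minors into nonvanishing of $A$-minors. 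You instead work on the parametrization side, computing the Gauss map directly on the dense torus via the monomial parametrization and reading off its Pl\"ucker coordinates; this avoids Nobile's theorem, the choice of binomial generators, and Gale duality, it makes completely transparent where characteristic zero enters (the integer determinants being units in $\K$), and in fact your computation adapts essentially verbatim to characteristic $p$, recovering Theorem \ref{Nash=log} with $\jp$ in place of $\jo$ — so it is a perfectly viable, arguably more elementary, alternative; the paper's route has the advantage of connecting to Nobile's general criterion and producing the congruence used elsewhere. Two steps deserve a sharper formulation than you gave them: first, the Gauss map and $t\mapsto[\,t^{\gamma_{i_1}+\cdots+\gamma_{i_d}}\,]$ are not literally equal — the Pl\"ucker coordinates indexed by subsets with $\det(\gamma_{i_1}\cdots\gamma_{i_d})=0$ vanish identically, so the graph lies over the corresponding coordinate subspace $\mathbb{P}^{m-1}\subseteq\mathbb{P}^{N-1}$, and on it the two maps differ by the constant diagonal automorphism with entries $\det(\gamma_{i_1}\cdots\gamma_{i_d})\in\K^*$; it is this automorphism over $\xg$, not merely the fact that the same coordinates vanish, that carries one graph closure isomorphically onto the other. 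Second, the reduction from the smooth locus to the torus should be justified by irreducibility of $\xg$ (the graph over the smooth locus is irreducible and the graph over the torus is dense in it), together with the standing hypothesis $\langle\gamma_1,\ldots,\gamma_n\rangle_{\Z}=\Z^d$, which guarantees that the torus orbit is open and smooth in $\xg$ and is parametrized isomorphically, so that the image of the differential of the parametrization really is $T_x\xg$.
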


The previous theorem is false over fields of prime characteristic. 

\begin{example}\label{counterex}
Suppose that  $\ch(\K)=2$, $\Gamma=\langle 2,3 \rangle_{\N}$, and $X_{\Gamma}={\V(x^3-y^2)}$. Then $\Bl_{\langle t^2,t^3\rangle}\xg$ is nonsingular but $\xg^*\cong\xg$ \cite[Example 1]{No}.
\end{example}

Our first goal is to give the positive characteristic version of Theorem \ref{Nash=log-0}.



\begin{definition}\label{log jac mod p}
Suppose that $\ch(\K)=p>0$. Let $\Gamma=\langle \gamma_1,\ldots,\gamma_n\rangle_{\N}\subseteq\Z^d$ be a semigroup such that $\langle \gamma_1,\ldots,\gamma_n\rangle_{\Z}=\Z^d$. Consider the following ideal:
$$\jp=\langle t^{\gamma_{i_1}+\cdots+\gamma_{i_d}}|\det(\gamma_{i_1}\cdots\gamma_{i_d})\neq0\, \mathrm{mod} \, p, 1\leq i_1<\cdots<i_d\leq n \rangle\subseteq\kg.$$
The ideal $\jp$ is called the \textit{logarithmic Jacobian ideal modulo $p$ of $\xg$}. 
\end{definition}

\begin{example}
Let $\Gamma=\langle 2,3\rangle_{\N}\subseteq\N$. Then $\mathcal{J}_2=\langle t^3 \rangle$, $\mathcal{J}_3=\langle t^2 \rangle$, and $\mathcal{J}_p=\langle t^2,t^3 \rangle$, for $p=0$ and $p\geq 5$.
\end{example}

We now present a property of short exact sequence of matrices. In fact,
there is a more general version of this property in the context of realizable matroids, which is known as Gale duality \cite[Theorem 2.2.8]{Ox}. 
We give the proof of this  property for the sake of completeness.

\begin{lemma}\label{exact}
Let
\begin{equation}
\xymatrix{0\ar[r]&\K^{c}\ar[r]^{B}&\K^{n}\ar[r]^A&\K^d\ar[r]&0}.\notag
\end{equation}
be a short exact sequence of vector spaces over $\K$.
Let $K\subseteq[n]=\{1,\ldots,n\}$, $|K|=c$. Denote as $B_K$ the matrix formed by the rows of $B$ corresponding to $K$. Similarly, denote as $A_{[n]\setminus K}$ the matrix formed by the columns of $A$ corresponding to $[n]\setminus K$. Then
$$\det B_K\neq0\Leftrightarrow \det A_{[n]\setminus K}\neq0.$$
\end{lemma}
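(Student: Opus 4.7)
My plan is to reduce the lemma to a single transversality condition for the image of $B$, viewed as a subspace of $\K^n$. The ingredients I would extract from exactness are that $V := \im B = \ker A$ is a $c$-dimensional subspace of $\K^n$, and that $n = c + d$. Set $L := [n] \setminus K$, and write $\K^L \subseteq \K^n$ for the coordinate subspace consisting of vectors whose entries indexed by $K$ vanish; this has dimension $d$.

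Next, I would factor the two matrices appearing in the statement as natural compositions. The matrix $B_K$ represents $\K^c \xrightarrow{B} \K^n \xrightarrow{\pi_K} \K^K$, where $\pi_K$ is the coordinate projection onto the entries indexed by $K$. Since $B$ is an isomorphism onto $V$, invertibility of $B_K$ is equivalent to $\pi_K|_V \colon V \to \K^K$ being an isomorphism; because $\dim V = \dim \K^K = c$, this amounts to injectivity of $\pi_K|_V$, i.e., to $V \cap \K^L = 0$. On the other side, $A_L$ represents $\K^L \hookrightarrow \K^n \xrightarrow{A} \K^d$, which is a map between spaces of the same dimension $d$, so it is invertible iff injective, iff $\K^L \cap \ker A = \K^L \cap V = 0$.

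Both determinant conditions therefore reduce to the same geometric statement $V \cap \K^L = 0$, which yields the biconditional. I do not anticipate any genuine obstacle: the substantive content of the lemma is just the observation that two apparently different ``complementary minor'' nonvanishing conditions both express transversality of $V$ to the coordinate subspace $\K^L$, and the short exact sequence contributes precisely the two ingredients — the equality $\im B = \ker A$ and the dimension count $c + d = n$ — that allow this identification to be made.
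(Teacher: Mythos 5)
Your proof is correct, but it takes a different route from the paper's. The paper fixes $K=\{d+1,\dots,n\}$, passes to the dual exact sequence $0\to\K^d\xrightarrow{A^T}\K^n\xrightarrow{B^T}\K^c\to0$, and shows by an explicit computation that if $\det A_D\neq0$ (with $D=[n]\setminus K$) then $B_K^T$ is surjective, hence $\det B_K\neq0$; the converse is then obtained by running the same argument on the dual sequence. You instead observe that both minor conditions are reformulations of a single transversality statement: writing $V=\im B=\ker A$ and $L=[n]\setminus K$, the matrix $B_K$ is the matrix of $\pi_K\circ B$ and $A_L$ is the matrix of $A\circ\iota_L$, so (using $\dim V=|K|=c$ and $|L|=d$) each of $\det B_K\neq0$ and $\det A_L\neq0$ is equivalent to $V\cap\K^L=0$. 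This buys you both implications at once, with no transposes, no choice of coordinates for $K$, and no duality argument; it also makes transparent the matroid-theoretic content (complementary bases for the row space of $B$ and the column space of $A$) that the paper only alludes to via Gale duality. The paper's computational proof, on the other hand, is self-contained at the level of matrix manipulations and exhibits concretely how a preimage under $B^T_K$ is constructed. Both arguments use exactness in equivalent ways: you use $\im B=\ker A$ together with the dimension count $n=c+d$, while the paper uses exactness of the dual sequence; no gap in either direction of your argument.
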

\begin{proof}

Without loss of generality we assume $K=\{d+1,\ldots,n\}$. Denote $D=[n]\setminus K$. 






Assume that $\det(A_D)\neq 0$. Let $u\in \K^c$. Since multiplication by $B^T$ is surjective, there exists $\begin{pmatrix}
w_0 \\ v_0 \end{pmatrix}$ such that $B^T \begin{pmatrix}
w_0 \\ v_0 \end{pmatrix}= B^T_D w_0+B^T_K v_0=u.$
Since $\det(A^T_D)=\det(A_D)\neq 0$, there exists $z\in \K^d$ such that
$A^T_D z=w_0$. 
Let 
$v=v_0- A^T_K z $.
Thus, 
$\begin{pmatrix}
0 \\ v \end{pmatrix}=\begin{pmatrix}
w_0 \\ v_0 \end{pmatrix}-\begin{pmatrix}
A^T_D z \\ A^T_K z \end{pmatrix}
=\begin{pmatrix}
w_0 \\ v_0 \end{pmatrix}-A^Tz$.
Hence,
$$
B^T_Kv= 0+B^T_K v
=
B^T \begin{pmatrix}
0 \\ v \end{pmatrix}
=B^T\begin{pmatrix}
w_0 \\ v_0 \end{pmatrix}- B^T A^T z=B^T\begin{pmatrix}
w_0 \\ v_0 \end{pmatrix}=u.$$
Hence, the linear transformation associated  to $B^T_K$ is surjective, and so, 
$\det(B_K)=\det(B^T_K)\neq 0$.

The other implication follows as in the previous paragraph working on the dual exact sequence.
\end{proof}

\begin{theorem}\label{Nash=log}
Suppose that  $\ch(\K)=p>0$. The Nash blowup of $\xg$ is isomorphic to the blowup of its logarithmic Jacobian ideal modulo $p$.
\end{theorem}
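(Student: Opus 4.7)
The plan is to adapt the characteristic-zero proof of Gonz\'alez-Sprinberg (and its refinements by Lejeune-Jalabert--Reguera and Gonz\'alez--Teissier), tracking where the prime $p$ enters. The starting point is the standard identification of the Nash blowup of a $d$-dimensional equidimensional affine variety $X\subseteq\K^n$ with the blowup of the Fitting ideal $\mathcal{I}$ generated by the $(n-d)\times(n-d)$ minors of the Jacobian of any generating set of $I_X$; this identification is valid in arbitrary characteristic (Nobile's example only shows that the resulting blowup may fail to resolve). It then suffices to prove $\Bl_{\mathcal{I}}(\xg)\cong\Bl_{\jp}(\xg)$. Note that under the standing hypothesis $\langle\gamma_1,\ldots,\gamma_n\rangle_{\Z}=\Z^d$, the greatest common divisor of the maximal minors $\det(\gamma_{i_1}|\cdots|\gamma_{i_d})$ equals $1$, so at least one is nonzero modulo $p$ and $\jp$ is a nonzero ideal.

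To compute $\mathcal{I}$ I would pick a $\Z$-basis (or a larger generating set) $b^{(1)},\ldots,b^{(m)}$ of the lattice of relations among $\gamma_1,\ldots,\gamma_n$, with associated binomial generators of $\ig$. Differentiating and restricting to the torus yields Jacobian entries of the form $b^{(j)}_i\cdot t^{\alpha^{(j)}-\gamma_i}$, where $\alpha^{(j)}=\sum_k(b^{(j)}_k)^{+}\gamma_k\in\Gamma$. For each $K\subseteq[n]$ of size $n-d$, the $(n-d)\times(n-d)$ minor evaluates (up to sign) to
$$
\det(B_K)\cdot t^{\sum_j\alpha^{(j)}-\gamma_K},
$$
where $B=(b^{(1)}\,|\cdots|\,b^{(m)})$ and $\gamma_K=\sum_{i\in K}\gamma_i$. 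Applying Lemma \ref{exact} to the short exact sequence $0\to\K^{n-d}\xrightarrow{B}\K^n\xrightarrow{A}\K^d\to 0$ modulo $p$, with $A=(\gamma_1|\cdots|\gamma_n)$, yields $\det(B_K)\not\equiv 0\pmod p\Leftrightarrow\det(\gamma_{[n]\setminus K})\not\equiv 0\pmod p$. Thus the nonzero minors, indexed by such $K$, correspond bijectively via complementation $K\leftrightarrow I=[n]\setminus K$ with the nonzero generators $t^{\gamma_I}$ of $\jp$.

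Comparing monomials: $t^{\sum_j\alpha^{(j)}-\gamma_K}=t^{\sum_j\alpha^{(j)}-\Sigma}\cdot t^{\gamma_I}$ where $\Sigma=\sum_i\gamma_i$, so on the level of ideals one obtains $\mathcal{I}=t^{\sum_j\alpha^{(j)}-\Sigma}\cdot\jp$ up to the (nonzero) scalar factors $\det(B_K)$. Since multiplying an ideal by a monomial (a non-zero-divisor in $\kg$, and a unit after localizing to the torus) does not alter the blowup, the two Rees algebras agree, and hence $\xg^*\cong\Bl_{\mathcal{I}}(\xg)\cong\Bl_{\jp}(\xg)$; the agreement on the dense open torus is enough since the Nash blowup is an isomorphism there.

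The hardest part will be verifying that the monomial factor $t^{\sum_j\alpha^{(j)}-\Sigma}$ genuinely lies in $\kg$, so that the identity $\mathcal{I}=t^{\sum_j\alpha^{(j)}-\Sigma}\cdot\jp$ makes sense as an equality of ideals in $\kg$ and not merely in a localization. This requires choosing the set $\{b^{(j)}\}$ rich enough that every coordinate $k$ appears with positive contribution at least once in $\sum_j(b^{(j)}_k)^{+}$; passing from a minimal $\Z$-basis to a Markov basis of the relation lattice should suffice. A closely related subtlety is that a $\Z$-basis of the relation lattice often fails to generate $\ig$, so one must either enlarge the generating set accordingly or use that the Fitting ideal of $\Omega^{1}_{\xg/\K}$ is insensitive to this choice.
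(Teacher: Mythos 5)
Your opening step is where the proof breaks. You assert as ``standard'' that the Nash blowup of $X$ equals the blowup of the ideal generated by \emph{all} $(n-d)\times(n-d)$ minors of the Jacobian of an \emph{arbitrary} generating set of $I_X$ (the Fitting ideal of $\Omega^1_X$). That is not the standard fact and it is not justified: what the literature (and the paper) uses is Nobile's Theorem 1, which applies to exactly $c=n-d$ elements of $I_X$ whose Jacobian has generic rank $c$ along $X$. Your two ways of instantiating the claim each fail. If you take only a lattice basis $b^{(1)},\ldots,b^{(c)}$ (which is what your displayed formula $\det(B_K)\,t^{\sum_j\alpha^{(j)}-\gamma_K}$ implicitly assumes, since with $m>c$ rows a maximal minor requires a choice of rows as well as columns and $\det(B_K)$ is not defined), then these binomials in general do not generate $\ig$, so the ideal you blow up is not a Fitting ideal of $\Omega^1_{\xg}$ and your fallback that ``the Fitting ideal is insensitive to this choice'' does not apply; what is needed at this point is precisely Nobile's theorem together with a verification that the Jacobian of these $c$ binomials still has generic rank $c$ \emph{in characteristic $p$}. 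If instead you take a genuine generating set (a Markov basis), the blowup of the ideal of all maximal minors is the closure of the graph of the all-minors map, which involves extra Pl\"ucker-type row factors and a priori only dominates the Nash blowup; you give no argument that it coincides with it. The paper resolves both issues at once: it chooses exactly $c$ binomials whose exponent differences form a basis of $\ker A$, proves the congruence $\prod_{k\in K}x_k\det J_K\equiv\prod_{i}x^{\alpha_i}\det\bar{B}_K \bmod \ig$ in arbitrary characteristic, and uses Lemma \ref{exact}, the minimality of $\{\gamma_1,\ldots,\gamma_n\}$ and the primeness of $\ig$ to get $\det J_{K_0}\not\equiv 0\bmod\ig$ for some $K_0$, i.e.\ generic rank $c$ mod $p$; only then does \cite[Theorem 1]{No} identify the Nash blowup with the blowup of $\langle\det J_K\rangle$. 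This rank verification in characteristic $p$ is exactly the step your proposal skips.

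Two smaller points. The remark that ``agreement on the dense open torus is enough since the Nash blowup is an isomorphism there'' is not a valid principle: two blowups can agree over a dense open subset without being isomorphic (blowing up a point versus the identity), so the burden is carried entirely by the principal-factor argument, which must therefore be an identity of ideals in $\kg$, not merely on the torus. Relatedly, the difficulty you single out as the hardest part (whether $t^{\sum_j\alpha^{(j)}-\Sigma}$ lies in $\kg$) is an artifact of dividing by $t^{\gamma_K}$; the paper avoids it by multiplying the congruence by $\prod_{i\in[n]\setminus K}x_i$, comparing the honest ideals $\langle (x_1\cdots x_n)\det J_K\rangle$ and $\bigl\langle\bigl(\prod_{i\in[n]\setminus K}x_i\bigr)\bigl(\prod_i x^{\alpha_i}\bigr)\det\bar{B}_K\bigr\rangle$ inside $\kg$ and then cancelling the fixed monomial $\prod_i x^{\alpha_i}$. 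Your use of Lemma \ref{exact} (Gale duality) and the observation that some maximal minor of $A$ is nonzero modulo $p$, so $\jp\neq 0$, are correct and agree with the paper.
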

\begin{proof}
The proof follows the arguments given by Gonz\'{a}lez P\'{e}rez and Teissier \cite[Proposition 60]{GT}, combined with the previous lemma.

Assume that $\{\gamma_1,\ldots,\gamma_n\}$ is a minimal generating set of $\Gamma\subseteq\Z^d$ and $\langle\gamma_1,\ldots,\gamma_n\rangle_{\Z}=\Z^d$. Let $A=(\gamma_1\cdots\gamma_n)_{d\times n}$ and $c=n-d$. Since $\K$ is algebraically closed, there exist $x^{\alpha_1}-x^{\beta_1},\ldots,x^{\alpha_c}-x^{\beta_c}\in\ig$ such that $\alpha_1-\beta_1,\ldots,\alpha_c-\beta_c$ define a basis of the kernel of $A$. Let $B=(b_1\cdots b_c)_{n\times c}$, where $b_i=\alpha_i-\beta_i$. Then we have an exact sequence,
\begin{equation}
\xymatrix{0\ar[r]&\Z^{c}\ar[r]^{B}&\Z^{n}\ar[r]^A&\Z^d\ar[r]&0}.\notag
\end{equation}
Since the kernel of $A$ is a saturated sublattice of $\Z^n$, one can extend $b_1,\ldots,b_c$ to a basis of $\Z^n$. We denote as $\bar{A}$ and $\bar{B}$ the matrices obtained by taking the entries modulo $p$. Hence, we obtain an exact sequence of $\Z_p$-vector spaces,
\begin{equation}\label{zp seq}
\xymatrix{0\ar[r]&\Z_p^{c}\ar[r]^{\bar{B}}&\Z_p^{n}\ar[r]^{\bar{A}}&\Z_p^d\ar[r]&0}.
\end{equation}
Take $D_0\subseteq\{1,\ldots,n\}$ such that $|D_0|=d$ and $\det(\bar{A}_{D_0})\neq0$. Letting $K_0=[n]\setminus D_0$, we get $\det(\bar{B}_{K_0})\neq0$ by Lemma \ref{exact}. Denote as $J_{K_0}$ the matrix formed by the columns corresponding to $K_0$ of the Jacobian matrix $\Jac(x^{\alpha_i}-x^{\beta_i})_{1\leq i\leq c}$.

We have that
$\prod_{k\in K_0}x_k\det J_{K_0}\equiv \prod_{i=1}^c x^{\alpha_i}\det\bar{B}_{K_0} \mod \ig$
\cite[Section 2.2, Lemme 2]{GS1}, \cite[Proposition 60]{GT}.
We point out that this congruence is usually proved over fields of characteristic zero; however,  the same proof holds in arbitrary characteristic.
Since $\ig$ is a prime ideal and $\{\gamma_1,\ldots,\gamma_n\}$ is a minimal generating set of $\Gamma$, we obtain $\det(J_{K_0})\neq0 \mod \ig$.  Hence, $\Jac(x^{\alpha_i}-x^{\beta_i})_{1\leq i\leq c}$ has maximal rank equal to $c$. It follows that the blowup of $\xg$ along the ideal $\langle \det J_K \, | \, {K\subseteq[n], |K|=c} \rangle$ is isomorphic to the Nash blowup of $\xg$ \cite[Theorem 1]{No}.

Let $K\subseteq[n]$, $|K|=c$. As before, $\prod_{k\in K}x_k \det J_K\equiv \prod_{i=1}^c x^{\alpha_i}\det \bar{B}_K\mod \ig.$ Multiply this congruence by the monomial $\prod_{i\in [n]\setminus K}x_i$ to obtain:
\begin{align}\label{cong2}
(x_1\cdots x_n)\det J_K\equiv \Big(\prod_{i\in [n]\setminus K}x_i \Big)\prod_{i=1}^c x^{\alpha_i}\det \bar{B}_K\mod \ig.
\end{align}

Since multiplying an ideal by a principal ideal gives isomorphic blowups, Congruence (\ref{cong2}) implies:
\begin{align}
\xg^*&\cong \Bl_{\langle\det J_K \, | \, {K\subseteq[n],|K|=c}\rangle}\xg\notag\\
&\cong \Bl_{\langle x_1\cdots x_n\rangle\langle \det J_K \, | \, {K\subseteq[n],|K|=c} \rangle}\xg \notag\\
&\cong \Bl_{\langle(\prod_{i\in [n]\setminus K}x_i) (\prod_{i=1}^c x^{\alpha_i})\det \bar{B}_K\, | \, {K\subseteq[n],|K|=c} \rangle}\xg\notag\\
&\cong \Bl_{\langle(\prod_{i\in [n]\setminus K}x_i)\det\bar{B}_K\, | \, {K\subseteq[n],|K|=c}\rangle}\xg. \notag
\end{align}
Using the isomorphism $\pol/\ig\cong\kg$ and Lemma \ref{exact} applied to the exact sequence (\ref{zp seq}), the ideal 
$$\Big\langle \Big(\prod_{i\in [n]\setminus K}x_i\Big)\det\bar{B}_K\, \Big| \, \substack{K\subseteq[n] \\ |K|=c} \Big\rangle=\Big\langle \prod_{i\in [n]\setminus K}x_i\, \Big|\, \det\bar{B}_K\neq0, \, {\substack{K\subseteq[n] \\ |K|=c} \Big\rangle}$$
corresponds to the ideal
$$\langle t^{\gamma_{i_1}+\cdots+\gamma_{i_d}}\, |\, \det(\gamma_{i_1}\cdots\gamma_{i_d})\neq0\, \mathrm{mod} \,  p, \, {1\leq i_1<\cdots<i_d\leq n} \rangle=\jp.$$
In conclusion, $\xg^*\cong \Bl_{\jp}\xg$.
\end{proof}

Using Theorem \ref{Nash=log}, a combinatorial description of the Nash blowup of a toric variety can be obtained with the  framework developed by G\'onzalez-Teissier for the blowup of a toric variety along any monomial ideal \cite[Section 2.6]{GT}. 

\begin{remark}
As mentioned in the introduction, the characteristic zero analogue of Theorem \ref{Nash=log} was studied by several authors. 
In particular, there is a version using the language of Minkowski sums \cite[Theorem 2.9]{At}. Even though the characteristic zero assumption is not explicitly stated in that work \cite[Theorem 2.9]{At}, it is implicitly used in the proof.
\end{remark}



\section{Resolution of normal toric surfaces by iterated normalized Nash blowups}

In this section we prove that normalized Nash blowups solve the singularities of normal toric surfaces over fields of prime characteristic. 


\begin{notation}\label{NotSec3}
Throughout this section,  $\sigma\subseteq\R^2$ denotes  a nonregular strongly convex rational polyhedral cone of dimension $2$, and $\Gamma=\sd\cap\Z^2$, where $\sd\subseteq\R^2$ is the dual cone of $\sigma$.
Let $\Theta\subseteq\R^2$ be the convex hull of $\Gamma\setminus\{(0,0)\}$. Let $\{\gamma_1,\ldots,\gamma_n\}\subseteq\Z^2$ be the points lying on the compact edges of the boundary polygon $\partial\Theta$. 
We order $\{\gamma_1,\ldots,\gamma_n\}$ according to the  counterclockwise order.
\end{notation}

We note that $n\geq3$ since $\sigma$ is nonregular. We now recall the following description of the minimal generating set of $\Gamma$.

\begin{proposition}[{\cite[Proposition 1.21]{O}}]\label{seg}
In the context of Notation~\ref{NotSec3}, $\{\gamma_1,\ldots,\gamma_n\}$ is the minimal set of generators of $\Gamma$.
\end{proposition}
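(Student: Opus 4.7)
My plan is to prove two statements: first, that every $\gamma\in\Gamma$ is a nonnegative integer combination of the $\gamma_i$'s (generation), and second, that no $\gamma_i$ is a nonnegative integer combination of the others (minimality). Since $\Gamma$ has no units other than $0$, these together imply that $\{\gamma_1,\ldots,\gamma_n\}$ is the unique minimal generating set.

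Minimality is a supporting-hyperplane argument. For each $\gamma_i$, fix a compact edge $e$ of $\partial\Theta$ containing it; by convexity of $\Theta$ there is a linear functional $\ell\colon\R^2\to\R$ with $\ell\equiv r$ on $e$ and $\ell\geq r$ on $\Theta$, and $r>0$ because $0\notin\Theta\subseteq\sd$. Suppose one had $\gamma_i=\sum_{j\neq i}c_j\gamma_j$ with $c_j\in\N$. Applying $\ell$ gives $r=\ell(\gamma_i)\geq\bigl(\sum_{j}c_j\bigr)r$, which forces $\sum c_j\leq 1$; the case $\sum c_j=0$ would give $\gamma_i=0$, impossible, and $\sum c_j=1$ would give $\gamma_i=\gamma_j$ for some $j\neq i$, contradicting distinctness of the $\gamma_i$'s.

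For generation I would induct on the value $\ell_0(\gamma)$ of a fixed linear functional $\ell_0$ strictly positive on $\sd\setminus\{0\}$. Given $\gamma\in\Gamma\setminus\{0\}$ not already equal to some $\gamma_i$, observe that $\gamma$ lies in the closed subcone spanned by two consecutive generators $\gamma_i$ and $\gamma_{i+1}$ (in the counterclockwise order of Notation~\ref{NotSec3}). An elementary two-dimensional lattice computation then shows that $\gamma-\gamma_i$ or $\gamma-\gamma_{i+1}$ remains in $\sd\cap\Z^2=\Gamma$; since subtracting a nonzero element of $\Gamma$ strictly decreases $\ell_0$, and $\ell_0$ takes only finitely many values on $\Gamma$ below $\ell_0(\gamma)$, the induction terminates.

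The main obstacle is precisely this elementary lattice step: one must verify that the compact edges of $\partial\Theta$ \emph{cover the bottom} of $\Theta$, i.e.\ that $\Gamma\setminus\{0\}\subseteq\bigcup_{i=1}^{n}(\gamma_i+\sd\cap\Z^2)$. In dimension two this follows from the definition of $\Theta$ as the convex hull of $\Gamma\setminus\{0\}$, together with the fact that $\sd$ is the cone spanned by $\gamma_1$ and $\gamma_n$. A cleaner alternative is to invoke Gordan's lemma to produce \emph{some} finite generating set, observe that the minimality argument forces every indecomposable element of $\Gamma$ to lie on a compact edge of $\partial\Theta$, and conclude by double inclusion.
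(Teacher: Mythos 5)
The paper gives no argument for this proposition---it is quoted directly from \cite[Proposition 1.21]{O}---so a self-contained proof is a legitimate goal, and your minimality half essentially works: take the supporting form $\ell$ of a compact edge $e$ with $\ell\equiv r$ on $e$ and $\ell\geq r$ on $\Theta$, and apply it to a purported relation. One caveat: ``$r>0$ because $0\notin\Theta$'' is not a proof, since a supporting value at a nonzero point can a priori be $\leq 0$. The correct reason is that $\ell\geq 0$ on the recession cone $\sd$ of $\Theta$ (else $\ell$ is unbounded below on $\Theta$), so $r=\ell(\gamma_i)\geq 0$; and if $r=0$ then $\gamma_i$ lies on the ray $\sd\cap\ell^{\perp}$, so every multiple $k\gamma_i\in\Gamma\subseteq\Theta$ has $\ell$-value $r$, making the face supported by $\ell$ unbounded, contradicting that it is the compact edge $e$.

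The generation half, however, has a genuine gap, exactly where you flag ``the main obstacle.'' Writing $\gamma=a\gamma_i+b\gamma_{i+1}$ with $a,b\geq 0$ for a consecutive pair, the only thing ``the definition of $\Theta$'' yields (via the supporting form $\ell$ of the edge containing $\gamma_i,\gamma_{i+1}$) is $a+b\geq 1$; this does not exclude $0<a,b<1$, and in that case neither $\gamma-\gamma_i$ nor $\gamma-\gamma_{i+1}$ is visibly in $\sd$, so the asserted ``elementary two-dimensional lattice computation'' is unproved. What is actually needed---and is the real content of the proposition, quoted again later in the paper in the proof of Proposition \ref{vertex}---is that consecutive generators satisfy $\det(\gamma_i\ \gamma_{i+1})=\pm 1$. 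This can be extracted from your own functional: a lattice point of $\conv(\{0,\gamma_i,\gamma_{i+1}\})$ other than the three vertices lies in $\Gamma\setminus\{0\}\subseteq\Theta$, yet its $\ell$-value is at most $r$, forcing it to lie strictly between $\gamma_i$ and $\gamma_{i+1}$ on $e$, contradicting consecutivity; hence the triangle is empty, its area is $1/2$ (Pick's theorem), the pair is a $\Z$-basis, so $a,b\in\N$ and generation is immediate, with no induction, once one also checks that $\gamma_1,\gamma_n$ are the primitive generators of the two rays of $\sd$ so that the subcones $\Rz\gamma_i+\Rz\gamma_{i+1}$ cover $\sd$ (you assert this too, and it does need a word). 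Your fallback via Gordan's lemma does not close the gap either: the supporting-hyperplane argument shows that lattice points on compact edges are indecomposable, whereas what you need is the converse---every indecomposable element lies on a compact edge---which is equivalent to the generation statement itself, so as written that route is circular.
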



We use Theorem \ref{Nash=log} to prove that iterated normalized Nash blowups solve normal toric surfaces in prime characteristic. We show that the involved combinatorics are independent of the characteristic. The result thus follows from the classical work of G. Gonz\'alez-Sprinberg \cite{GS1}.

\begin{lemma}\label{abg}
Let $\alpha,\beta,\gamma\in\Z^2_{\geq0}$ be pairwise linearly independent vectors, in counterclockwise orientation. For a subset $S\subseteq\R^2$, denote as $\conv(S)$ the convex hull of $S$.
\begin{enumerate}
\item If $\beta\in\conv(\{\alpha,\gamma\})$, then $\alpha+\gamma\in\conv(\{\alpha+\beta,\beta+\gamma\})$.
\item If $\beta\notin\conv(\{\alpha,\gamma\})+\Rz(\alpha,\gamma)$, then \[ \alpha+\gamma\in\conv(\{\alpha+\beta,\beta+\gamma\})+\Rz(\alpha,\gamma).\]
\end{enumerate}
\end{lemma}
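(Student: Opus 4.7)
The plan is to reduce both statements to linear identities by writing $\beta$ in the basis $\{\alpha,\gamma\}$. Since $\alpha$ and $\gamma$ are linearly independent, they form a basis of $\R^2$, so there exist unique $a,b\in\R$ with $\beta=a\alpha+b\gamma$. The counterclockwise orientation forces the angle of $\beta$ to lie strictly between the angles of $\alpha$ and $\gamma$, which together with the pairwise linear independence places $\beta$ in the interior of the two-dimensional cone $\Rz\alpha+\Rz\gamma$; hence $a>0$ and $b>0$. A quick computation then translates the two hypotheses into scalar conditions on $a+b$: the segment $\conv\{\alpha,\gamma\}$ corresponds to the affine line $a+b=1$, so the hypothesis of (1) becomes $a+b=1$; the region $\conv\{\alpha,\gamma\}+\Rz(\alpha,\gamma)$ corresponds to the half-plane $a+b\geq 1$, so the hypothesis of (2) becomes $a+b<1$.

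Next I parametrize a general element of $\conv\{\alpha+\beta,\beta+\gamma\}+\Rz(\alpha,\gamma)$ as $t(\alpha+\beta)+(1-t)(\beta+\gamma)+u\alpha+v\gamma$ with $t\in[0,1]$ and $u,v\geq 0$, substitute $\beta=a\alpha+b\gamma$, and collect coefficients in the basis $\{\alpha,\gamma\}$. This simplifies to $(t+a+u)\alpha+(1+b-t+v)\gamma$, and equating the result with $\alpha+\gamma$ yields the linear relations $u=1-a-t$ and $v=t-b$.

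For part (1), assuming $a+b=1$, the choice $t=b\in(0,1)$ gives $u=v=0$ and produces the explicit convex combination $\alpha+\gamma=b(\alpha+\beta)+a(\beta+\gamma)$. For part (2), assuming $a+b<1$, the same choice $t=b\in(0,1)$ produces $v=0$ and $u=1-a-b>0$, yielding the identity $\alpha+\gamma=b(\alpha+\beta)+a(\beta+\gamma)+(1-a-b)\alpha$, which exhibits $\alpha+\gamma$ as an element of $\conv\{\alpha+\beta,\beta+\gamma\}+\Rz(\alpha,\gamma)$.

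The only (mild) obstacle I anticipate is justifying the strict positivity $a,b>0$ from the counterclockwise assumption together with pairwise linear independence; this is essentially the observation that $\beta$ must lie in the open planar sector bounded by the rays through $\alpha$ and $\gamma$, so that its barycentric-type coordinates in the basis $\{\alpha,\gamma\}$ are both strictly positive. With that in hand, the remainder of the argument is purely mechanical.
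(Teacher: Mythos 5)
Your argument is essentially the paper's: write $\beta=a\alpha+b\gamma$, use the orientation plus pairwise independence to get $a,b>0$, translate the hypotheses into $a+b=1$ (resp.\ $a+b<1$), and then exhibit $\alpha+\gamma$ explicitly as a point of $\conv(\{\alpha+\beta,\beta+\gamma\})$ (resp.\ of $\conv(\{\alpha+\beta,\beta+\gamma\})+\Rz(\alpha,\gamma)$); your specific choice $t=b$ is just one instance of the family of decompositions the paper uses. The setup and part (1) are correct, and the translation of the hypothesis of (2) is fine (the region $\conv(\{\alpha,\gamma\})+\Rz(\alpha,\gamma)$ is exactly $\{a\alpha+b\gamma: a,b\geq 0,\ a+b\geq 1\}$, so with $a,b>0$ its complement forces $a+b<1$). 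The only flaw is the final displayed identity in part (2): with $t=b$ the coefficient of $\beta+\gamma$ is $1-t=1-b$, not $a$, so the correct certificate is
\[
\alpha+\gamma=b(\alpha+\beta)+(1-b)(\beta+\gamma)+(1-a-b)\alpha .
\]
As written, your identity $\alpha+\gamma=b(\alpha+\beta)+a(\beta+\gamma)+(1-a-b)\alpha$ is false when $a+b<1$ (it holds only if $a+b=1$), and even formally it could not certify membership, since the weights $b$ and $a$ sum to $a+b<1$ and hence do not give a convex combination. This is a transcription slip carried over from part (1), where $a=1-b$; your own relations $u=1-a-t$, $v=t-b$ with $t=b$ already yield the corrected identity, so the proof stands after this one-coefficient fix.
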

\begin{proof}
The first statement of the lemma follows from direct computation.

We now prove (2). There exist unique $r,s\in\R$ such that $\beta=r\alpha+s\gamma$. We claim that $r,s>0$ and $0<r+s<1$. By the orientation, we have that $\beta$ is in the cone spanned by $\alpha$ and $\gamma$. Thus, $r,s\geq0$ and $r=0$ or $s=0$ contradicts linear independence. If $r+s\geq1$, let $r=r'+r''$ and $s=s'+s''$ with $r'+s'=1$ and $r',s',r'',s''\geq0$. Then
$$\beta=(r'\alpha+s'\gamma)+(r''\alpha+s''\gamma)\in\conv(\{\alpha,\gamma\})+\Rz(\alpha,\gamma),$$
which contradicts the hypothesis on $\beta$. This justifies the claim. Hence, $0<r,s<1$ and $0<r+s<1$.

We have $\alpha+\beta=(1+r)\alpha+s\gamma$ and $\beta+\gamma=r\alpha+(s+1)\gamma$. Set $u=1-r$ and $v=1-s$. Then $0<u,v<1$ and $1<u+v$. Write $u=u'+u''$ and $v=v'+v''$ with $u'+v'=1$ and $u',v',u'',v''\geq0$. Thus,
\begin{align}
\alpha+\gamma&=(\alpha+\gamma-\beta)+\beta\notag\\
&=(u\alpha+v\gamma)+(u'+v')\beta\notag\\
&=u'(\alpha+\beta)+v'(\beta+\gamma)+u''\alpha+v''\gamma\notag\\
&\in\conv(\{\alpha+\beta,\beta+\gamma\})+\Rz(\alpha,\gamma).\notag \qedhere
\end{align}
\end{proof}

\begin{proposition}\label{vertex}
In the setting of Notation \ref{NotSec3},
let $p=\ch(\K)$, not necessarily positive. Let $\ns(\jp)$ denote the convex hull of \[\{(\gamma_i+\gamma_j)+\sd\,|\,\det(\gamma_i\mbox{ }\gamma_j)\neq0\, \mathrm{mod} \, p\}_{1\leq i<j\leq n}.\] Then the vertices of $\ns(\jp)$ are contained in the set \[\{\gamma_\ell+\gamma_{\ell+1}\,| \, \ell \in\{1,\ldots,n-1\}\}.\] In particular, $\ns(\jp)=\ns(\jo)$, for any $p>0$.
\end{proposition}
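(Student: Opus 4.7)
The plan is to combine two ingredients. In Step 1 I will show that consecutive pairs $\gamma_\ell,\gamma_{\ell+1}$ satisfy $|\det(\gamma_\ell,\gamma_{\ell+1})|=1$, so the sums $\gamma_\ell+\gamma_{\ell+1}$ contribute to the generating set of $\jp$ for every prime $p$. In Step 2 I will use Lemma \ref{abg} to prove inductively that every cone $(\gamma_i+\gamma_j)+\sd$ in the defining family of $\ns(\jp)$ already lies in the convex hull of the consecutive cones, making the non-consecutive ones redundant. Both conclusions of the proposition then fall out.

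\textbf{Step 1.} Consider the triangle $T$ with vertices $0,\gamma_\ell,\gamma_{\ell+1}$ and apply Pick's theorem. Each $\gamma_i$ is primitive: otherwise $\gamma_i=k\gamma'$ with $k\geq 2$ and $\gamma'\in\Z^2$ primitive would give $\gamma_i=\gamma'+(k-1)\gamma'$ with both summands in $\Gamma\setminus\{0\}$, contradicting minimality of $\{\gamma_1,\ldots,\gamma_n\}$ (Proposition \ref{seg}). Hence $[0,\gamma_\ell]$ and $[0,\gamma_{\ell+1}]$ have no interior lattice points. The segment $[\gamma_\ell,\gamma_{\ell+1}]$ coincides with the piece of $\partial\Theta$ between the two consecutive points (every corner of $\partial\Theta$ is itself a $\gamma$, and between consecutive $\gamma$'s no further corner can intervene, so the piece is a single straight segment); consequently any extra lattice point on it would be another $\gamma$, contradicting consecutiveness. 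Finally, an interior lattice point of $T$ would lie in $\Gamma\setminus\{0\}\subseteq\Theta$ yet strictly on the origin side of $\partial\Theta$, which is impossible. Pick's theorem then gives $\text{area}(T)=1/2$, i.e., $|\det(\gamma_\ell,\gamma_{\ell+1})|=1$, so this determinant is nonzero modulo every prime.

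\textbf{Step 2.} I will prove by induction on $j-i$ the containment
\[
(\gamma_i+\gamma_j)+\sd\ \subseteq\ \conv\{(\gamma_\ell+\gamma_{\ell+1})+\sd : i\leq\ell<j\}.
\]
The base case $j-i=1$ is immediate. For $j-i\geq 2$, apply Lemma \ref{abg} with $\alpha=\gamma_i$, $\beta=\gamma_{i+1}$, $\gamma=\gamma_j$; these are pairwise linearly independent (distinct primitive vectors lie on distinct rays) and in counterclockwise orientation. Writing $\gamma_{i+1}=u\gamma_i+v\gamma_j$ with $u,v\geq 0$, the key geometric fact is that $u+v\leq 1$: the compact polygonal part of $\partial\Theta$ is the origin-facing boundary of the convex region $\Theta$, so the chord $[\gamma_i,\gamma_j]$, lying inside $\Theta$, is weakly farther from the origin than $\gamma_{i+1}\in\partial\Theta$ at the same angular position. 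If $u+v=1$, case (1) of Lemma \ref{abg} applies; if $u+v<1$, case (2) applies. Either way,
\[
\gamma_i+\gamma_j\ \in\ \conv\{\gamma_i+\gamma_{i+1},\gamma_{i+1}+\gamma_j\}+\Rz(\gamma_i,\gamma_j).
\]
Since $\Rz(\gamma_i,\gamma_j)\subseteq\sd$, adding $\sd$ yields $(\gamma_i+\gamma_j)+\sd\subseteq\conv\{(\gamma_i+\gamma_{i+1})+\sd,(\gamma_{i+1}+\gamma_j)+\sd\}$, and the inductive hypothesis on the pair $(i+1,j)$ decomposes the second term into consecutive sums.

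\textbf{Conclusion and obstacle.} By Step 2, the defining cones corresponding to non-consecutive $(i,j)$ are redundant in $\ns(\jp)$, so the extreme points of $\ns(\jp)$ occur only at consecutive sums $\gamma_\ell+\gamma_{\ell+1}$, giving the first claim. By Step 1, every such consecutive sum contributes to both $\jp$ and $\jo$, so $\ns(\jp)=\conv\{(\gamma_\ell+\gamma_{\ell+1})+\sd:1\leq\ell\leq n-1\}=\ns(\jo)$. I expect the main technical obstacle to be the geometric claim in Step 2 that $u+v\leq 1$; making it rigorous requires identifying the compact polygonal portion of $\partial\Theta$ as the origin-facing boundary of $\Theta$ and then exploiting convexity to compare $\gamma_{i+1}$ with the chord $[\gamma_i,\gamma_j]$. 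A secondary point requiring care, in Step 1, is to confirm that $[\gamma_\ell,\gamma_{\ell+1}]$ lies on $\partial\Theta$ even when one of the points sits at a corner of the polygon—handled by noting that all corners are themselves $\gamma$'s.
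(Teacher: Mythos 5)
Your proposal is correct and follows essentially the same route as the paper: the paper likewise applies Lemma \ref{abg} to triples $\gamma_i,\gamma_j,\gamma_k$ (invoking Proposition \ref{seg} to guarantee one of the lemma's two hypotheses, which is exactly your $u+v\leq 1$ dichotomy) to show that non-consecutive sums are never vertices, and then uses $\det(\gamma_\ell\ \gamma_{\ell+1})=\pm 1$ for the final claim. The only differences are cosmetic: you organize the redundancy as an induction on $j-i$ using the middle generator $\gamma_{i+1}$, and you supply a Pick's-theorem proof of the unimodularity of consecutive pairs, a fact the paper simply asserts.
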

\begin{proof}
As before, $\{\gamma_1,\ldots,\gamma_n\}$ denotes the minimal generating set of $\Gamma$, ordered counterclockwise. Recall that $n\geq3$. Up to a change of coordinates, we can assume that $\Gamma\subseteq\Z^2_
{\geq0}$. Let $1\leq i<j<k\leq n$. By Proposition \ref{seg}, we know that $\gamma_{i},\gamma_{j},\gamma_{k}$, satisfy the first and second conditions in Lemma \ref{abg}. By the lemma, 
$$\gamma_{i}+\gamma_{k}\in\conv(\{\gamma_{i}+\gamma_j,\gamma_j+\gamma_k\})\subseteq\ns(\jp),$$
or
$$\gamma_{i}+\gamma_{k}\in\conv(\{\gamma_{i}+\gamma_j,\gamma_j+\gamma_k\})+\Rz(\gamma_i,\gamma_k)\subseteq\ns(\jp).$$
Hence, $\gamma_i+\gamma_k$ is either on the facet determined by $\gamma_i+\gamma_j$ and $\gamma_j+\gamma_k$, or it is in the interior of $\ns(\jp)$. In particular, it is not a vertex of $\ns(\jp)$. We conclude that the vertices of $\ns(\jp)$ must be of the form $\gamma_\ell+\gamma_{\ell+1}$ for some $\ell\in\{1,\ldots,n-1\}$.

The last part of the proposition follows from the fact $\det(\gamma_{\ell}\mbox{ }\gamma_{l+1})=1$ for all $\ell\in\{1,\ldots,n-1\}$. 
\end{proof}

A similar statement to Proposition \ref{vertex} was already known  \cite[Proposition 4.11]{At}.

\begin{theorem}\label{Nash toric surfaces}
In the setting of Notation \ref{NotSec3}, suppose that  $\ch(\K)=p>0$. The iteration of Nash blowups followed by normalization solves the singularities of normal toric surfaces.
\end{theorem}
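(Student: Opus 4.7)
The strategy is to reduce to Gonzalez-Sprinberg's characteristic zero theorem by showing that the combinatorial data underlying the iterated normalized Nash blowup is independent of the characteristic. The two key inputs are Theorem \ref{Nash=log}, which identifies the Nash blowup with $\Bl_{\jp}\xg$, and Proposition \ref{vertex}, which shows that the Newton polyhedron $\ns(\jp)$ coincides with $\ns(\jo)$ for every $p>0$.

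First I would apply Theorem \ref{Nash=log} to identify the Nash blowup of $\xg$ with $\Bl_{\jp}\xg$. Because $\jp$ is a monomial ideal on a normal affine toric variety, its normalized blowup is again a toric variety, whose fan is the subdivision of $\sigma$ along the rays dual to the compact facets of $\ns(\jp)$. This description belongs to the Gonzalez-Perez and Teissier framework for normalized blowups of toric varieties along monomial ideals \cite{GT} and is characteristic-free. Combined with Proposition \ref{vertex}, which yields $\ns(\jp)=\ns(\jo)$, one concludes that the subdivision of $\sigma$ produced by one step of the normalized Nash blowup coincides with the one in characteristic zero.

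Next I would iterate. Each maximal cone $\tau$ in this subdivision is either regular, in which case the corresponding affine chart of the normalized Nash blowup is nonsingular, or it is again a two-dimensional nonregular strongly convex rational polyhedral cone. In the latter case, the affine chart is once more a normal toric surface, so Theorem \ref{Nash=log} and Proposition \ref{vertex} apply verbatim to it. By induction on the iterations, the entire sequence of fans produced in characteristic $p$ agrees with the one produced in characteristic zero, and regularity of a cone is a purely combinatorial condition independent of the characteristic. Hence termination of the iteration in characteristic $p$ follows directly from termination in characteristic zero, which is Gonzalez-Sprinberg's theorem \cite{GS1}.

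The main obstacle I expect is the bookkeeping required to ensure that the Gonzalez-Teissier description of normalized monomial blowups applies cleanly in arbitrary characteristic (it should, but deserves an explicit reference), together with verifying the inductive step: after one normalized Nash blowup the newly appearing cones must again satisfy the hypotheses of Notation \ref{NotSec3}, which is automatic since they remain two-dimensional strongly convex rational polyhedral cones and the corresponding minimal generators are again described by Proposition \ref{seg}. Once these points are dispatched, the rest of the argument is a direct translation of Gonzalez-Sprinberg's proof.
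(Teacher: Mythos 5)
Your proposal is correct and follows essentially the same route as the paper: identify the Nash blowup with the blowup of $\jp$ via Theorem \ref{Nash=log}, use Proposition \ref{vertex} to see that $\ns(\jp)=\ns(\jo)$ so the induced subdivision of $\sigma$ (via the Gonz\'alez P\'erez--Teissier description of normalized monomial blowups) is characteristic-independent, and then invoke Gonz\'alez-Sprinberg's characteristic-zero theorem for termination. The only difference is that you spell out the induction over iterations explicitly, which the paper leaves implicit.
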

\begin{proof}
Recall the following classical theorem due to G. Gonz\'alez-Sprinberg: if $\ch(\K)=0$, normalized Nash blowups solves normal toric surfaces \cite[Section 2.3, Th\'eor\`eme]{GS1}. Equivalently, by Theorem \ref{Nash=log-0}, normalized blowups of the logarithmic Jacobian ideal $\jo$ solves normal toric surfaces. 

The normalized blowup of the logarithmic Jacobian ideal $\jo$ is determined by the Newton polygon $\ns(\jo)$, which, in turn, induces a subdivision of $\sigma$ \cite[Section 2.6 and Remark 26]{GT}. 
Hence, the iteration of normalized blowups of $\jo$ gives place to a regular subdivision of $\sigma$.

Now suppose $\ch(\K)=p>0$. By Proposition \ref{vertex}, $\ns(\jp)=\ns(\jo)$. In particular, the subdivision of $\sigma$ induced by $\ns(\jp)$ is the same as the one induced by $\ns(\jo)$. Hence, the iteration of normalized blowups of $\jp$ gives place to a regular subdivision of $\sigma$. By Theorem \ref{Nash=log}, we conclude that the iteration of normalized Nash blowups solves $\xg$.
\end{proof}


\section{Higher dimensional toric varieties}

In view of the results of previous sections, one may wonder whether the combinatorics of the Nash blowup of toric varieties are independent of the characteristic in general. In other words, me may ask whether the last statement of Proposition \ref{vertex} also holds for higher dimensional toric varieties. We conclude this paper by showing that this is not the case already in dimension three.

Let $\sigma\subseteq\R^3$ be a cone such that $\sd=\langle (1,0,0),(0,1,0),(1,1,2)\rangle_{\R_{\geq0}}\subseteq\R^3$. Using Macaulay2 \cite{M2}, we obtain that $\Gamma=\sd\cap\Z^3$ is minimally generated by $\gamma_1=(1,0,0)$, $\gamma_2=(0,1,0)$, $\gamma_3=(1,1,1)$, and $\gamma_4=(1,1,2)$. Hence, the logarithmic Jacobian ideals in characteristic zero and two are, respectively, $\jo=\langle t^{(2,2,1)},t^{(2,3,3)},t^{(3,2,3)},t^{(2,2,2)} \rangle$, $\jj_2=\langle t^{(2,2,1)},t^{(2,3,3)},t^{(3,2,3)}\rangle$.

Let $\ns(\jp)=\conv\{\gamma_i+\gamma_j+\gamma_k+\sd|\det(\gamma_i \mbox{ }\gamma_j \mbox{ }\gamma_k)\neq 0\mod p\}_{1\leq i<j<k\leq4}.$
We claim that $\ns(\jo)\neq\ns(\jj_2)$. We obtain ${\{x-2=0\}}$, ${\{y-2=0\}}$, and $\{x+y-z-2=0\}$ are supporting hyperplanes of $\ns(\jo)$ from a direct computation. The intersection of these three planes is $\{(2,2,2)\}$. Hence, $(2,2,2)$ is a vertex of $\ns(\jo)$. Finally, notice that $(2,2,2)\notin\ns(\jj_2)$.

Now we study the normalized Nash blowup of $\xg$. In characteristic zero, the normalization of $\xg^*$ is nonsingular \cite[Section 6, Table 5]{At}. 

To compute $\xg^*$ in characteristic two, we compute the blowup of $\jj_2$ following the combinatorial description for the blowup of a toric variety along any monomial ideal \cite[Section 2.6]{GT}. The polyhedron $\ns(\jj_2)$ has three vertices: $v_1=(2,2,1)$, $v_2=(3,2,3)$, $v_3=(2,3,3)$. These vertices give place to the affine charts of $\xg^*$. Each of these affine charts are also toric varieties, determined by the following semigroups:

\begin{align}
\Gamma_1=&\Gamma+\langle v_2-v_1,v_3-v_1 \rangle_{\N}\notag\\
&=\langle (1,0,0),(0,1,0),(1,1,1),(1,0,2),(0,1,2)\rangle_{\N},\notag\\
\Gamma_2=&\Gamma+\langle v_1-v_2,v_3-v_2 \rangle_{\N}\notag\\
&=\langle (1,0,0),(1,1,1),(1,1,2),(-1,0,-2),(-1,1,0)\rangle_{\N},\notag\\
\Gamma_3=&\Gamma+\langle v_1-v_3,v_2-v_3 \rangle_{\N}\notag\\
&=\langle (0,1,0),(1,1,1),(1,1,2),(0,-1,-2),(1,-1,0)\rangle_{\N}.\notag
\end{align}

To compute the normalization of $\xg^*$ we need to find the saturation of these semigroups. Using Macaulay2 we obtain the following minimal set of generators for the saturation of each $\Gamma_i$, denoted as $\overline{\Gamma_i}$:
\begin{align}
\overline{\Gamma_1}&=\langle (1,0,0),(0,1,0),(1,0,2),(0,1,2),(1,0,1),(0,1,1) \rangle_{\N},\notag\\
\overline{\Gamma_2}&=\langle (1,0,0),(-1,1,0),(0,0,-1),(-1,0,-2),(1,1,2),(0,1,1) \rangle_{\N},\notag\\
\overline{\Gamma_3}&=\langle (1,-1,0),(0,1,0),(0,0,-1),(0,-1,-2),(1,1,2),(1,0,1) \rangle_{\N}.\notag
\end{align}

In particular, the normalization of $\xg^*$ is singular. Repeating the entire algorithm for each $\overline{\Gamma_i}$, the resulting saturated semigroups can all be generated by three elements. We conclude that by iterating twice the normalized Nash blowup in characteristic two, we obtain a resolution of $\xg$.

We note also that the corresponding fans for the normalized blowups of $X_\Gamma$ with respect to $\jj_0$ and $\jj_2$  have different rays; in particular, neither blowup factors through the other via a small morphism.

\section*{Acknowledgments}

We thank the referees for their careful reading and valuable comments. The first author would like to thank Jawad Snoussi and Enrique Ch\'avez for stimulating discussions on Nash blowups of toric varieties.


\end{document}